\newtheorem{thm}{Theorem}
\newtheorem{thm*}{Theorem}
\newtheorem{lemm}{Lemma}
\DeclareMathOperator{\diag}{diag}
\renewcommand{\@biblabel}[1]{#1.}
\newcommand{\Arg}{\mathop{\mathrm{Arg}}\nolimits}
\newcommand{\Log}{\mathop{\mathrm{Log}}\nolimits}
\title{Analytic continuation for solutions to the system \\
	of trinomial algebraic equations}
\author{Irina Antipova,\, Ekaterina  Kleshkova\, and Vladimir Kulikov}
\date{}
\begin{document}
	\maketitle
	\begin{abstract}
In the paper, we deal with the problem of getting analytic continuations for the monomial function associated with a solution to the reduced trinomial algebraic system. 
In particular, we develop the idea of applying the Mellin-Barnes integral representation of the monomial function for solving the extension problem and demonstrate how to achieve the same result following the fact that the solution to the universal trinomial system is polyhomogeneous. As a main result, we construct Puiseux expansions (centred at the origin) representing analytic continuations of the monomial function.
	\end{abstract}

\noindent {\it Keywords: algebraic equation, analytic continuation,  Puiseux series, discriminant locus, Mellin--Barnes integral.} 
\bigskip

\noindent  {\it 2010 Mathematical Subject Classification:} { 32D15, 14J17.}

	\section{Introduction}
	
	  We consider a system of  $n$ trinomial algebraic equations of the form
	\begin{equation}
	\label{tri}
	\sum\limits_{\alpha\in A^{(i)}} a_\alpha^{(i)} y^\alpha = 0,\,\, i=1,\ldots , n,
	\end{equation}
	with unknown $y=(y_1,\dots, y_n)\in \left({\mathbb C}\setminus 0\right)^n$ and variable coefficients $a_{\alpha}^{(i)}$, where $A^{(i)}\subset{\mathbb Z}^n$ are fixed three-element subsets and $y^{\alpha}=y_{1}^{\alpha_1}\cdot \ldots \cdot y_{n}^{\alpha_n}$ is a monomial. Without loss of generality we assume, that all sets  $A^{(i)}$ contain the zero element $\bar{0}$ (this may be achieved by dividing the $i$th equation in (\ref{tri}) by a monomial with the exponent in $A^{(i)}$, see the system (\ref{ptiv_isk_sis}) below). We call (\ref{tri}) the {\it universal trinomial system} since any trinomial algebraic system is a result of the substitution of polynomials in new variables for coefficients $a_{\alpha}^{(i)}$.
	
When $ n = 1 $, the system (\ref{tri}) is a scalar trinomial equation. It has a special status in the centuries-old history of algebraic equations. As early as 1786, Bring proved that every quintic polynomial could be reduced to the trinomial form $ y ^ 5 + ay + b $ using the Tschirnhaus transformation. At the turn of the XIX-XX centuries, the dependence of norms of roots on coefficients of the trinomial equation with fixed support was actively studied. Although algebraic characterisation of the mentioned dependence was given by  Bohl in 1908 yet, the geometric view on the problem has been formed much later.  In the recent study by Theobald and de Wolff \cite{ThdeW}, a geometrical and topological characterisation for the space of univariate trinomials was provided by reinterpreting the problem in terms of amoeba theory.

	Of particular interest is the {\it reduced system} of $n$ trinomial equations
	\begin{equation}
	\label{ptiv_isk_sis}
	y^{\omega^{(i)}}+x_{i} y^{\sigma^{(i)}}-1=0,\,\, i=1,\ldots, n,
	\end{equation}
	with unknown~$y=(y_1, \ldots, y_n)$, supports  of equations $A^{(i)}:= \{ \omega^{(i)}, \sigma^{(i)}, \overline{0} \} \subset \mathbb{Z}^n_{\geqslant}$ and variable complex coefficients~$x=(x_1, \ldots, x_n)$. It is assumed that a matrix~$\omega$, composed of column vectors $\omega^{(1)}, \ldots, \omega^{(n)}$, is nondegenerate.

    Let $y(x)=(y_1(x),\ldots ,y_n(x))$ be a multivalued algebraic vector-function of solutions to the system (\ref{ptiv_isk_sis}). We call a branch of $y(x)$ defined by conditions $y_i(0)=1,\,\, i=1, \ldots, n$ the {\it principal solution} to the system (\ref{ptiv_isk_sis}).  Determined the principal solution $y(x)$ we consider the following monomial function
	\begin{equation}\label{mon}
	{y}^d(x):= y_1^{d_1}(x) \cdot \ldots \cdot y_n^{d_n}(x), \,\, d=(d_1, \ldots, d_n) \in \mathbb{Z}^n_{+}.
	\end{equation}
	Our goal is to obtain Puiseux expansions (centred at the origin) representing analytic continuations of the Taylor series for the monomial ~$y^d(x)$ of the principal solution to the system ~(\ref{ptiv_isk_sis}). Puiseux type parametrisations of an algebraic variety via the amoeba of the discriminant locus of the variety canonical projection were studied in \cite{ArSa}. The existence of such parametrisations for plane curves was proved by Puiseux \cite{Pu}: this fact is known as the Newton--Puiseux theorem which states that one can find local parametrisations of the form $x=t^k$, $y=\varphi(t)$, where $\varphi$ is a convergent power series. We aim at investigating  Puiseux expansions for analytic continuations of (\ref{mon}) which may fail to "recognize" some pieces of the discriminant set. It means that the convergence domain $G$ of a series projects onto the domain $\Log (G)$ containing a certain collection of connected components of the discriminant amoeba complement. An example in Section 2 illustrates how the series converging in the preimage ${\Log}^{-1}(E_0)$ of the component $E_0$ of the amoeba complement admits an analytic continuation to the domain $G$ for which $\Log (G)$ covers components $E_1$, $E_2$ and an amoeba tentacle separating them, see Figure \ref{ris:pic_am}. This analytic continuation is given by another series expansion.
	
	When $n=1$,  analytic continuations for the Taylor series of the principal solution to the universal algebraic equation (not necessarily a trinomial) were found in \cite {AMi}, where the Mellin--Barnes integral representation for the solution was used as a tool of the analytic continuation. This integral, with indicating the convergence region of it,  was wholly studied in \cite {A07}. While a power series converges in a polycircular domain, a Mellin-Barnes integral converges in a sectorial domain which is defined only by conditions for arguments ${\arg}\, x_i$ of variables $x_i$. Remark that the intersection of these domains is always nonempty. Consequently, a series expansion of the solution to the equation admits an analytic continuation into the sectorial domain by means of the integral. Of course, we may follow this approach to getting  analytic extensions for the monomial (\ref{mon}) in a case when the corresponding Mellin-Barnes integral represents it. Therein we can obtain analytic continuations of the Taylor series in the form of Puiseux series via the multidimensional residues technique. 
		
However, we can get the same series following the fact that the solution $y(a)$ to the system (\ref{tri}) is polyhomogeneous. For the trinomial system (\ref{tri}) it means that via some monomial transformation of coefficients the system can be reduced to  (\ref{ptiv_isk_sis}) or to another reduced system which, similarly,  has the only one variable coefficient in each equation. We perceive any reduced system of equations as the general (homogeneous) system (\ref {tri}) written in suitable coordinates. The transition from a reduced system to another one enables us to obtain series continuations for monomials of coordinates of solutions to these systems.
    
   The paper is organised as follows. In Section 2, we review the technique of the calculation of multidimensional Mellin-Barnes integrals which is based on the separating cycle principle formulated in \cite{Ts92} (see also \cite{ZhTs}). We present an example which illustrates what computational issues can arise in this way of getting analytic extensions. In Section 3, we discuss the procedure of the dehomogenization (reduction) of the system (\ref{tri}) and yield the Taylor series expansions for the monomials of the principal solutions to all reduced systems associated with the system (\ref{tri}).  Theorem 1 gives these series as a result of the application of the logarithmic residue formula \cite{Yuzhakov} and the linearization procedure for each reduced system. The idea of using the logarithmic residue formula for getting the Taylor expansions was realised in \cite{KuSt}, where the special instance of the reduced polynomial system, with the diagonal matrix $\omega$, was considered.  In Section 4, we use Taylor expansions derived in Theorem 1 and appropriate monomial transformations to obtain the desired Puiseux series which are supposed to be the analytic continuations of the Taylor series for the monomial  $y^d(x)$ of the principal solution to (\ref{ptiv_isk_sis}) (Theorem 2). Finally, we discuss the example from Section 2 again in terms of the result of Theorem 2.

	\section{Mellin-Barnes integral as a tool of analytic continuation}
	
Traditionally, the Mellin-Barnes integral is regarded as the inverse Mellin transform for special meromorphic functions, which are rations of products of a finite number of superpositions of gamma functions with affine functions. Their role in the theory of algebraic equations was revealed first by Mellin in~\cite{Me}, where he wrote without any proof the integral representation for the solution to the universal algebraic equation later investigated in  \cite{A07}.  In our study we consider such integrals in the extended sense, having in mind the presence of the polynomial factor in the integrand besides gamma-functions. 

The Mellin integral transform for monomials of a solution to the reduced polynomial system was studied in \cite{A03} and \cite{St03}.
Following \cite{St03}, we match the Mellin-Barnes integral 
	\begin{equation}
	\label{int_M_B}
	\frac{1}{ {\left( 2 \pi i \right)}^n } \int \limits_{ \gamma + i \mathbb{R}^n } \prod \limits_{j=1}^{n} \frac{ \Gamma { \left( z_j \right) } \Gamma { \left( \frac{d_j}{\omega_j} - \frac{1}{\omega_j} { \langle \sigma_j, z \rangle } \right) } }{ \Gamma { \left( \frac{d_j}{\omega_j} - \frac{1}{\omega_j} { \langle \sigma_j, z \rangle } + z_j +1 \right) } } Q(z) x^{-z}\, dz
	\end{equation}
	to the monomial $y^d(x)$. In (\ref{int_M_B}) $x^{-z}$ denotes the product $x_1^{-z_1} \cdot \ldots \cdot x_n^{-z_n}$,  $\sigma_j$ is the  $j$th row of the matrix $\sigma$ composed of column vectors $\sigma^{(1)},\dots , \sigma^{(n)}$,  $\gamma$ belongs to the domain
	\begin{equation*}
	U= \{ u \in \mathbb{R}_{+}^n: { \langle \sigma_j,u \rangle } < d_j, \,\, j=1, \ldots, n \},
	\end{equation*}
	and $Q(z)$ is a polynomial represented by the determinant
	\begin{equation*}
	\label{mn_Q}
	Q(z)= \frac{1}{ \det \omega } \det {\left| \left| \delta^j_i { \left( d_j - { \langle \sigma_j, z \rangle } \right) } + \sigma_j^{(i)} z_i \right| \right| }_{i,j=1 }^{n}, 
	\end{equation*}
	 where  $\delta_i^j$ is the Kronecker symbol. Here is assumed that $\omega$ is a diagonal matrix with elements $\omega_1, \ldots , \omega_n$ on the diagonal.
	
	 Remark that the integral ~({\ref{int_M_B}}) can have the empty convergence domain.  Following \cite{Ky17}, this integral has the nonempty convergence domain if and only if all the diagonal minors of the matrix $\sigma$ are positive. In this case, the integral~(\ref{int_M_B}) represents the monomial $y^d(x)$ of the principal solution to the system~(\ref{ptiv_isk_sis}), and it can be used as a tool of the constructive analytic continuation of power series. 
	
	Let us show how to calculate the integral ({\ref{int_M_B}}). A method of the calculation is based on the separating cycle principle formulated in \cite{Ts92} and developed in \cite{ZhTs}. This principle deals with the calculation of the Grothendieck-type integrals
	\begin{equation}\label{i_gr}
	\frac{1}{(2\pi i)^n}\int\limits_{\Delta_g}\frac{h(z)dz}{f_1(z)\ldots f_n(z)},
	\end{equation}
	where the integration set $\Delta_g$ is the skeleton of the polyhedron $\Pi_g$ associated with the holomorphic proper mapping $g:(g_1,\ldots , g_n):{\mathbb C}^n\to {\mathbb C}^n$, and the integrand has poles on divisors $D_j=\{z: f_j(z)=0\}$, $j=1,\ldots , n$. One means by the polyhedron $\Pi_g $ the preimage $ g ^ {- 1} (G) $ of the domain $ G = G_1 \times \ldots \times G_n $, where each $ G_j $ is a domain  on the complex plane with the piecewise smooth boundary. We associate a facet	$\sigma_j=\{z:g_j(z)\in\partial G_j,\,\, g_k(z)\in G_k,\,\, k\ne j\}$ of the polyhedron $\Pi_g$ with each  $j\in\{1,\ldots , n\}$.

	{\bf Definition.} {\it A polyhedron $\Pi_g$ is said to be compartible with the set of divisors $\{D_j\}$, if for each $j=1,\ldots , n$ the corresponding facet  $\sigma_j$ of the polyhedron $\Pi_g$ does not intersect the divisor $D_j$ .}
	
	Assume further that the intersection $Z=D_1\cap \ldots \cap D_n$ is discrete. The local residue with respect to the family of divisors $\{D_j\}$ at each point $a\in Z$ (the Grothendieck residue) is defined by the integral
	(see \cite{Ts92})
	$$
	{\mbox {\rm res}_{f,a}}\Omega=\frac{1}{(2\pi i)^n}\int\limits_{\Gamma_{a}(f)}^{}\Omega,
	$$
	where $\Omega$ is the integrand in (\ref{i_gr}), and $\Gamma_{a}(f)$ is a cycle given in the neiborhood $U_a$ of the point $a$ as follows
	$$
	\Gamma_{a}(f)=\{z\in U_a: |f_1(z)|=\varepsilon_1,\ldots , |f_n(z)|=\varepsilon_n\},\,\, \varepsilon_j<< 1.
	$$
	If $a$ is a simple zero of the mapping $f$, i.e. the Jacobian $J_f=\partial f/\partial z$ is nonzero at the point $a$, then the local residue is calculated by the formula
	\begin{equation}\label{f_Ca}
	{\mbox {\rm res}_{f,a}}\Omega=\frac{h(a)}{J_f(a)}.
	\end{equation}
	
{\bf Theorem (principle of separating cycles).} {\it If the polyhedron $\Pi_g$ is bounded and compatible with the family of polar divisors $\{D_j\}$, then the integral (\ref{i_gr}) equals to the sum of Grothendieck residues in the domain $\Pi_g$.}	
	
	One can reduce the integral (\ref{int_M_B}) to the canonical form (\ref{i_gr}) in the following way. We interpret the vertical integration subspace 	$\gamma + i \mathbb{R}^n$ as the skeleton of some polyhedron. For instance, in the case $n=1$, it can be the skeleton of only two polyhedra: the right and left halfplanes with the separating line $\gamma + i \mathbb{R}$.  For $n>1$ this subspace may serve as the skeleton of an infinite number of polyhedra. Our objective is to divide all the set of $2n$ families of polar hyperplanes of the integral  (\ref{int_M_B})
\begin{equation*}
	\begin{array}{l}
	L_{j}: z_j=-\nu,\,\, \\
	L_{n+j}: \frac{d_j}{\omega_j} - \frac{1}{\omega_j} { \langle \sigma_j, z \rangle }=-\nu, \,\, j=1,\ldots , n,\,\, \nu\in {\mathbb Z_{\geqslant}}
	\end{array}
\end{equation*}	
 into $n$ divisors and construct a polyhedron compatible with this family of divisors. We consider polyhedra of the type
 $$
 \Pi_g=\{z\in {\mathbb C}^n: {\mbox{\rm {Re}}}g_j(z)<r_j,\,\, j=1,\ldots, n\},
 $$
where $g_j(z)$ are liner functions with real coefficients. It is clear, that $\Pi_g=\pi +i{\mathbb R}^n$, where $\pi$ is a simplicial $n$--dimensional cone in the real subspace ${\mathbb R}^n\subset {\mathbb C}^n$. Remark that in the case of an unbounded polyhedron, besides the compatibility condition of the polyhedron and polar divisors, one should require a sufficiently rapid decrease of the integrand $\Omega$ in the polyhedron $\Pi_g$. For the integral (\ref{int_M_B}) the non-confluence property provides the decrease of the integrand, see \cite{PSTs05} and \cite{ZhTs}. We recall that the non-confluence property for the hypergeometric Mellin-Barnes integral means that sums of coefficients of the variable $z_j$ over all gamma-factors in the numerator and the denominator are equal.

Now, applying the technique discussed above, we construct analytic continuations for the solution to the following system of equations
\begin{equation}	
\label{eq1}
\left\{
\begin{aligned}
y_1^4+x_1y_1^2y_2-1=0,\\
y_2^4+x_2y_1y_2^2-1=0.\\
\end{aligned}
\right.
\end{equation}
	
For the description of the convergence domains of power series and Mellin-Barnes integrals we introduce the following mappings from $({\mathbb C}\setminus 0)^n$ into ${\mathbb{R}}^n$: 
	\begin{equation*}
			\Log : (x_1, \ldots, x_n) \longrightarrow \left(\log {|x_1|}, \ldots , \log {|x_n|} \right),
	\end{equation*}
	\begin{equation*}
			\Arg : (x_1, \ldots, x_n) \longrightarrow \left(\arg x_1, \ldots , \arg x_n \right).
	\end{equation*}

	The monomial $y_1(x)\cdot y_2(x)$ of the principle solution to the system (\ref{eq1}) admits the Taylor series representation
	\begin{equation}
	\label{RT_ex}
	\sum \limits_{\vert k \vert \geqslant 0} \frac{{(-1)}^{\vert k \vert}}{k!} \frac{ \Gamma { \left( \frac{1}{4}+  \frac{1}{2}k_1 + \frac{1}{4} k_2 \right) }  \Gamma { \left( \frac{1}{4} + \frac{1}{4} k_1 + \frac{1}{2} k_2 \right) } }{ \Gamma { \left( \frac{5}{4} - \frac{1}{2} k_1 + \frac{1}{4} k_2 \right) }  \Gamma { \left( \frac{5}{4} + \frac{1}{4} k_1 - \frac{1}{2} k_2 \right) } } \frac{1}{16} {(1+k_1+k_2)} x_1^{k_1} x_2^{k_2},
	\end{equation}
	which converges in some neighborhood of the origin, see Theorem 1 below. In turn, the Mellin-Barnes integral of the form
	\begin{equation}
	\label{eq:25}
	\frac{1}{{(2\pi i)}^2} \int \limits_{\gamma+i \mathbb{R}^2} \frac{ \Gamma(z_1) \Gamma(z_2) \Gamma \left( \frac{1}{4}- \frac{1}{2}z_1 -\frac{1}{4}z_2\right) \Gamma \left( \frac{1}{4}- \frac{1}{4}z_1- \frac{1}{2}z_2 \right) }{ \Gamma \left( \frac{5}{4}+ \frac{1}{2}z_1-\frac{1}{4}z_2 \right) \Gamma \left( \frac{5}{4}- \frac{1}{4}z_1+\frac{1}{2}z_2 \right)} \frac{\left(1-z_1-z_2\right)}{16}x^{-z} dz,
	\end{equation}
	where $\gamma$ is a point in the open quadrangle
	\begin{equation*}
	U= \left\{  {u\in \mathbb{R}^2_+ : 2u_1+u_2<1,\,\,u_1+2u_2<1 } \right\},
	\end{equation*}	
	represents the monomial $y_1(x)\cdot y_2(x)$ in a sectorial domain ${\Arg}^{-1}(\Theta)$ determined by
	\begin{equation}
	\label{eq:27}
	\Theta= \left\{ (\theta_1, \theta_2)\in \mathbb{R}^2:\,\, |\theta_1| < \frac{\pi}{2}, \,\, |\theta_2| < \frac{\pi}{2}, |2\theta_2 - \theta_1| < \frac{3\pi}{4}, |\theta_2 - 2\theta_1| < \frac{3\pi}{4} \right\},
	\end{equation}
	here $\theta_1=\arg{x_1}, \theta_2=\arg{x_2}$.  Figure 4 shows the domain $\Theta$ which is the interior of the convex octagon. The general description of convergence domains of multiple Mellin-Barnes integrals gives Theorem 4.4.25 in the book \cite{SdTs}. Thus, the integral (\ref{eq:25}) gives the analytic continuation of the series (\ref{RT_ex}) into the sectorial domain ${\Arg}^{-1}(\Theta)$ .

    We next calculate the integral (\ref{eq:25}) using the principle of separating cycles. It admits a representation as a sum of local residues of the integrand
	\begin{equation}
	\label{forma}
	\Omega =  \frac{ \Gamma(z_1) \Gamma(z_2) \Gamma \left( \frac{1}{4}- \frac{1}{2}z_1 -\frac{1}{4}z_2\right) \Gamma \left( \frac{1}{4}- \frac{1}{4}z_1- \frac{1}{2}z_2 \right) }{ \Gamma \left( \frac{5}{4}+ \frac{1}{2}z_1-\frac{1}{4}z_2 \right) \Gamma \left( \frac{5}{4}- \frac{1}{4}z_1+\frac{1}{2}z_2 \right) } \frac{\left(1-z_1-z_2\right)}{16}x_1^{-z_1}x_2^{-z_2} dz_1 \, dz_2
	\end{equation}
	in some polyhedron, which contains the vertical imagine integration subspace $\gamma + i \mathbb{R}^2$ as the skeleton. Furthermore, the polyhedron and polar divisors of  $\Omega$ should satisfy the compatibility conditions. 
	
	The form $\Omega$ has four families of polar complex lines:
	\begin{equation}\label{pp}
	\begin{split}
	&L_1: \, z_1=-\nu , \\
	&L_2: \, z_2=- \nu ,{}\\
	&L_3: \, \frac{1}{4}- \frac{1}{4}(2z_1+z_2)=- \nu ,{}\\
	&L_4: \, \frac{1}{4}- \frac{1}{4}(z_1+2z_2)=- \nu , \,\,\, \nu \in \mathbb{Z}_{ \ge }.
	\end{split}
	\end{equation}
	
	Figures \ref{ris:picU} and \ref{ris:picU_2} show the intersection of the real subspace with families (\ref{pp}), and also with
	\begin{equation*}
		\begin{split}
	&L_5: \, \frac{5}{4}+ \frac{1}{2}z_1-  \frac{1}{4} z_2=- \nu ,{}\\
	&L_6: \, \frac{5}{4}- \frac{1}{4}z_1+ \frac{1}{2} z_2=- \nu ,
		\end{split}
	\end{equation*}
	which are polar sets of gamma-functions in the denominator of the form (\ref{forma}). The quadrangle $U$, the point $\gamma$ belongs to, is coloured in grey.

	\begin{figure}[H]
			\begin{center}
			\begin{minipage}[h]{0.4\linewidth}
			\begin{center}	
			\includegraphics[width=1.1\textwidth]{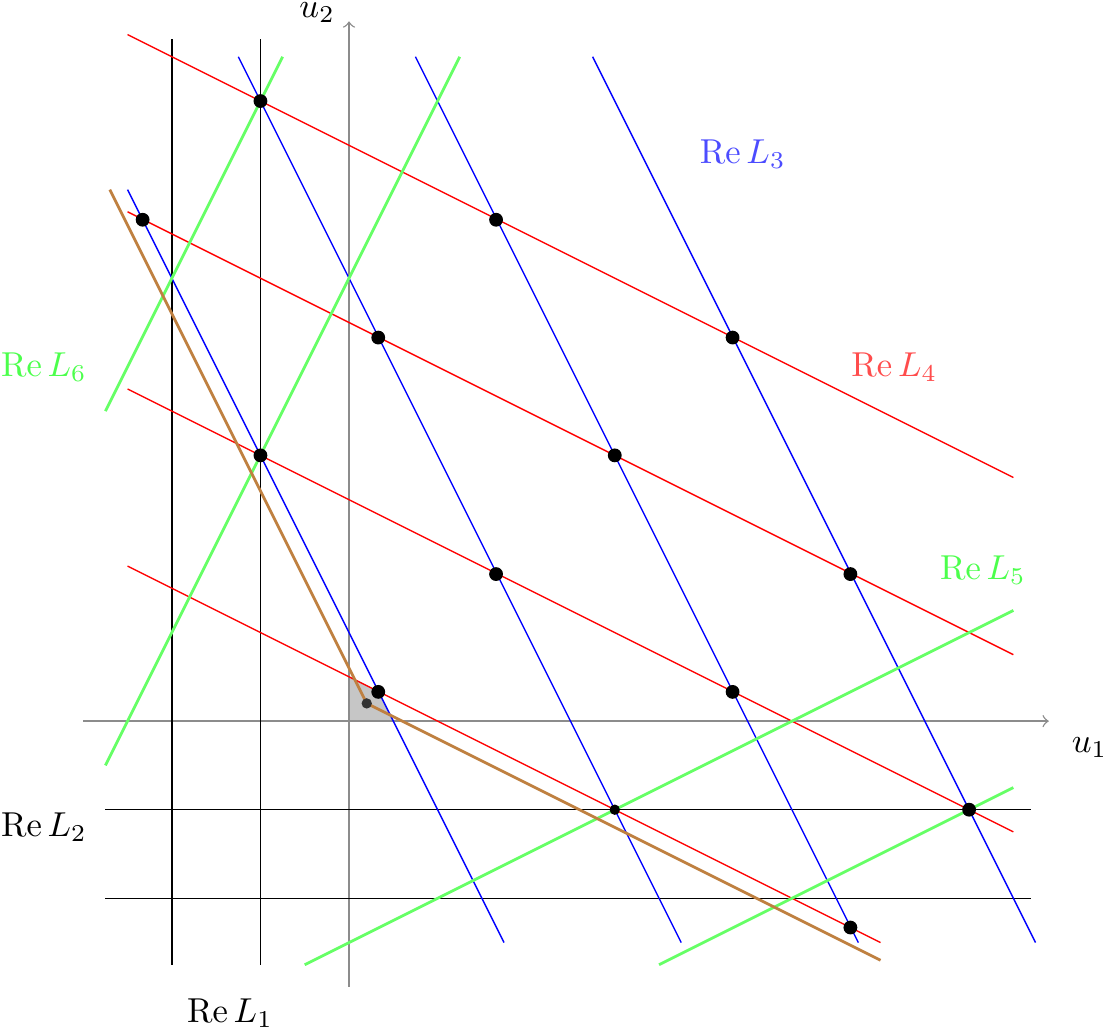}
			\caption{\small{The real section of polar divisors. The cone $\pi_1$}.}
			\label{ris:picU}
			\end{center}
			\end{minipage}
			\hfill
				\begin{minipage}[h]{0.5\linewidth}
				\begin{center}
				\includegraphics[width=0.98\textwidth]{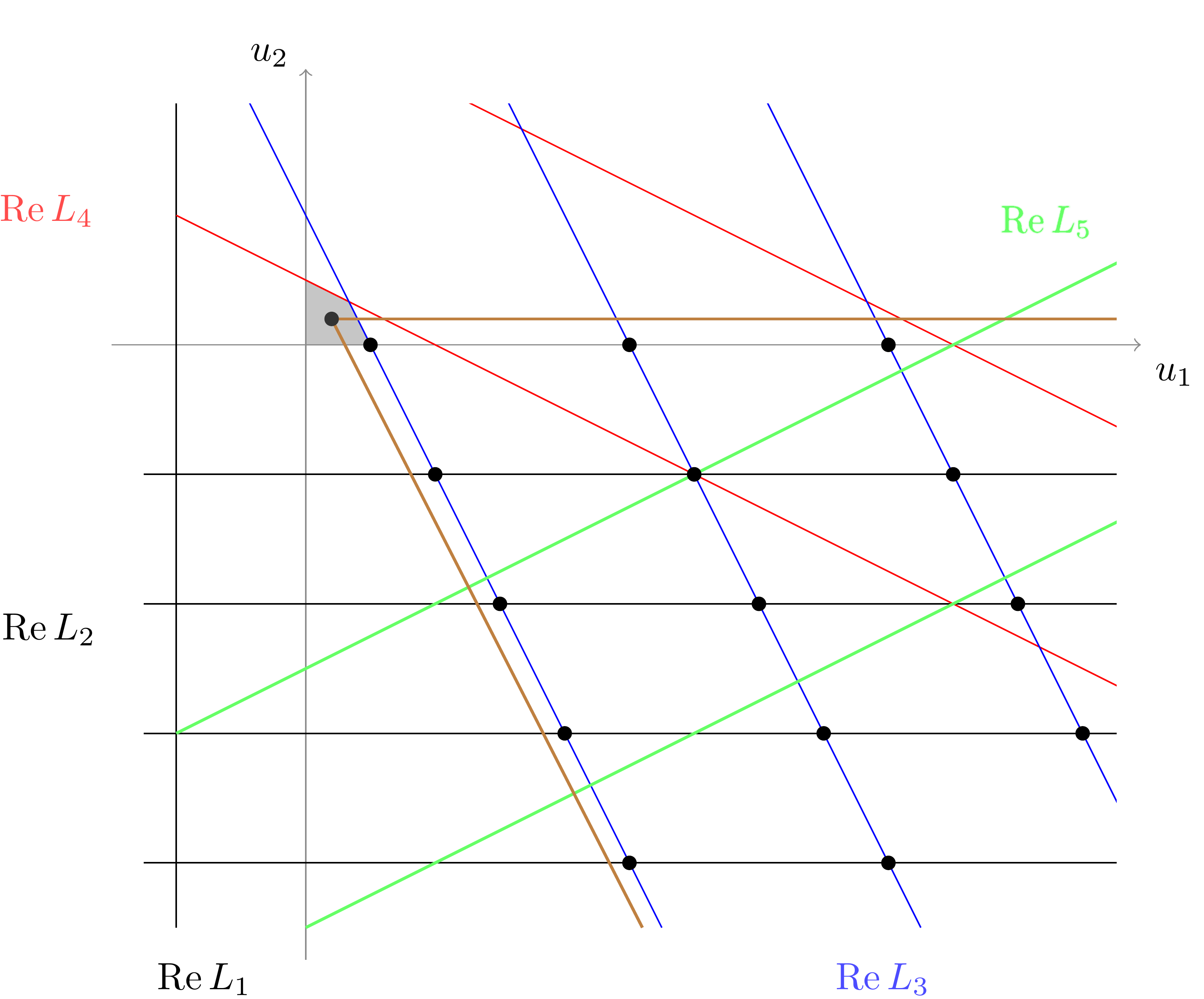}
			\caption{\small{The real section of polar divisors. The cone $\pi_2$}.}
			\label{ris:picU_2}
			\end{center}
			\end{minipage}
		\end{center}					
		\end{figure}

	At first, given the set of all polar lines of the integrand $\Omega$, we form two divisors  $D_1=\{ L_2, L_3 \}$ and  $D_2= \{ L_1, L_4 \}$. We next construct a polyhedron  $\Pi_1 = \pi_1 + i \mathbb{R}^2 $ compartible with this set of divisors, with the skeleton $\gamma + i \mathbb{R}^2$. Figure \ref{ris:picU} shows a two-dimensional cone (sector) $\pi_1\subset \mathbb{R}^2$ generated by rays which are parallel to the real sections of $L_3$ and $L_4$. It forms the polyhedron $\Pi_1$. Secondly, we consider divisors $D_1^{'}= \{ L_3, L_4 \}$ and $D_2^{'}=\{ L_2 \}$. A cone $\pi_2$  generated by rays which are parallel to the real sections of $L_3$ and $L_2$ forms a polyhedron $\Pi_2 = \pi_2 + i \mathbb{R}^2$, compartible with the set of divisors $D_1^{'}$, $D_2^{'}$, see Figure \ref{ris:picU_2}.
	
	We can see in Figure \ref{ris:picU} that families  $ L_5, L_6 $ as well as $ L_1, L_2, L_3, L_4 $ come into the polyhedron $\Pi_1$ , so in the cone $ \pi_1 $ there are points at which two, three and even four lines intersect. However, the form  $\Omega$ has nonzero residues only at points $z(k)=\left( z_1(k), z_2(k) \right)$ with coordinates
	\begin{equation}\label{poi_1}
	 	\begin{split}
	 	&z_1(k)=\frac{1}{3}+ \frac{8}{3}k_1-  \frac{4}{3} k_2,{}\\
	 	&z_2(k)=\frac{1}{3}- \frac{4}{3}k_1+  \frac{8}{3} k_2, \,\, k=(k_1, k_2) \in \mathbb{Z}^{2}_{\geqslant}.
	 	\end{split}
	 \end{equation}
	 
\noindent The intersection points (\ref{poi_1}) of lines $L_3$, $L_4$ are indicated in Figure \ref{ris:picU} by a black colour. Hence, the sum of local residues at points $z(k)$ yields the Puiseux series
	 \begin{equation}
	\label{raz}
	P_1(x)=\frac{1}{x_1^{1/3} x_2^{1/3} } \sum \limits_{k\in \mathbb{Z}^2_{\geqslant}} c_k x_1^{-8/3k_1+4/3k_2} x_2^{4/3k_1-8/3k_2}
	\end{equation}
	with coefficients
	\begin{equation}
	\label{c_k_p1}
	c_k= \frac{{(-1)}^{|k|}}{k!} \frac{ \Gamma{\left( \frac{1}{3}+\frac{8}{3} k_1 -\frac{4}{3} k_2 \right)} \Gamma{\left( \frac{1}{3}-\frac{4}{3} k_1 +\frac{8}{3}k_2 \right)} }{ \Gamma{\left( \frac{4}{3} +\frac{5}{3}k_1 -\frac{4}{3} k_2 \right)} \Gamma{\left( \frac{4}{3}-\frac{4}{3} k_1 +\frac{5}{3}k_2 \right)} } \frac{1}{9} {( 1-4k_1-4k_2 )}.
	\end{equation}
	
    Four families of lines $L_2, L_3, L_4$ and $L_5$ come into the polyhedron  $\Pi_2$, see Figure 2.  However, the form  $\Omega$ has nonzero residues only at points $z(k)=\left( z_1(k), z_2(k) \right)$ with coordinates
	\begin{equation}\label{poi_2}
	 	\begin{split}
	 	&z_1(k)=\frac{1}{2}+ 2k_1+  \frac{1}{2} k_2,{}\\
	 	&z_2(k)=-k_2, \,\, k=(k_1, k_2) \in \mathbb{Z}_{\geqslant}.
	 	\end{split}
	 \end{equation}
	 
\noindent Points (\ref{poi_2}) are black in Figure 2, where lines $L_2$, $L_3$ intersect. The sum of residues at $z(k)$ yields the Puiseux series
	 \begin{equation}
	\label{raz_2}
	P_2(x)=\frac{1}{x_1^{1/2}} \sum \limits_{k\in \mathbb{Z}^2_{\geqslant}} c_k x_1^{-2k_1-1/2k_2} x_2^{k_2}
	\end{equation}
	with coefficients
	\begin{equation}
	\label{c_k_p2}
	c_k= \frac{{(-1)}^{|k|}}{k!} \frac{ \Gamma{\left( \frac{1}{2}+2 k_1 +\frac{1}{2} k_2 \right)} \Gamma{\left( \frac{1}{8}-\frac{1}{2} k_1 +\frac{3}{8}k_2 \right)} }{ \Gamma{\left( \frac{3}{2} +k_1 +\frac{1}{2} k_2 \right)} \Gamma{\left( \frac{7}{8}-\frac{1}{2} k_1 -\frac{5}{8}k_2 \right)} } \frac{1}{16} {( 1-4k_1+k_2 )}.
	\end{equation}
	
	We remark that arguments of $\Gamma$--functions in coefficients of the series (\ref{RT_ex}) and also in  (\ref{c_k_p1}) and (\ref{c_k_p2}) can be real nonpositive numbers, which are poles for the function $\Gamma$. So by a ration of two $\Gamma$--functions we mean a meromorphic function with removable singularities at that points. For instance, we mean
	\begin{equation*}
	\frac{ \Gamma(-1) }{ \Gamma(0) }=\frac{ \Gamma(-1) }{ -\Gamma(-1) }=-1.
	\end{equation*}
	So, series (\ref{raz}) and (\ref{raz_2}) are analytic extensions of the series (\ref{RT_ex}).
	
We now characterise domains of convergence of Puiseux series obtained above in the logarithmic scale. According to the two-sided Abel lemma for hypergeometric series \cite{PSTs05}, there exist the relationship between the structure of the convergence domain of this series and its support. Since  series (\ref{raz}) and (\ref{raz_2}) represent branches of the multivalued algebraic function $y_1(x)\cdot y_2(x)$ with singularities on the discriminant set of the system (\ref{eq1}), projections of convergence domains of such series on the space of variables $\log |x_1|, \log |x_2|$ are unions of several components of the discriminant amoeba complement  for the system, see Figure \ref{ris:pic_am}. We recall that the amoeba of the algebraic set $V \subset {\mathbb C}^n$ is defined to be the image of $V$ under the mapping $\Log$. In this way, series (\ref{raz}) converges in the domain $G_1= \Log^{-1} (E_3)$, where $E_3$ is an amoeba complement component. The projection ${\Log} (G_2)$ of the convergence domain  $G_2$ of the series (\ref{raz_2}) covers two components $E_1$, $E_2$ and an amoeba tentacle separating them, see Figure \ref{ris:pic_am}.

	\begin{figure}[H]
			\begin{center}
				\begin{minipage}[h]{0.45\linewidth}
				\begin{center}
				\includegraphics[width=1\textwidth]{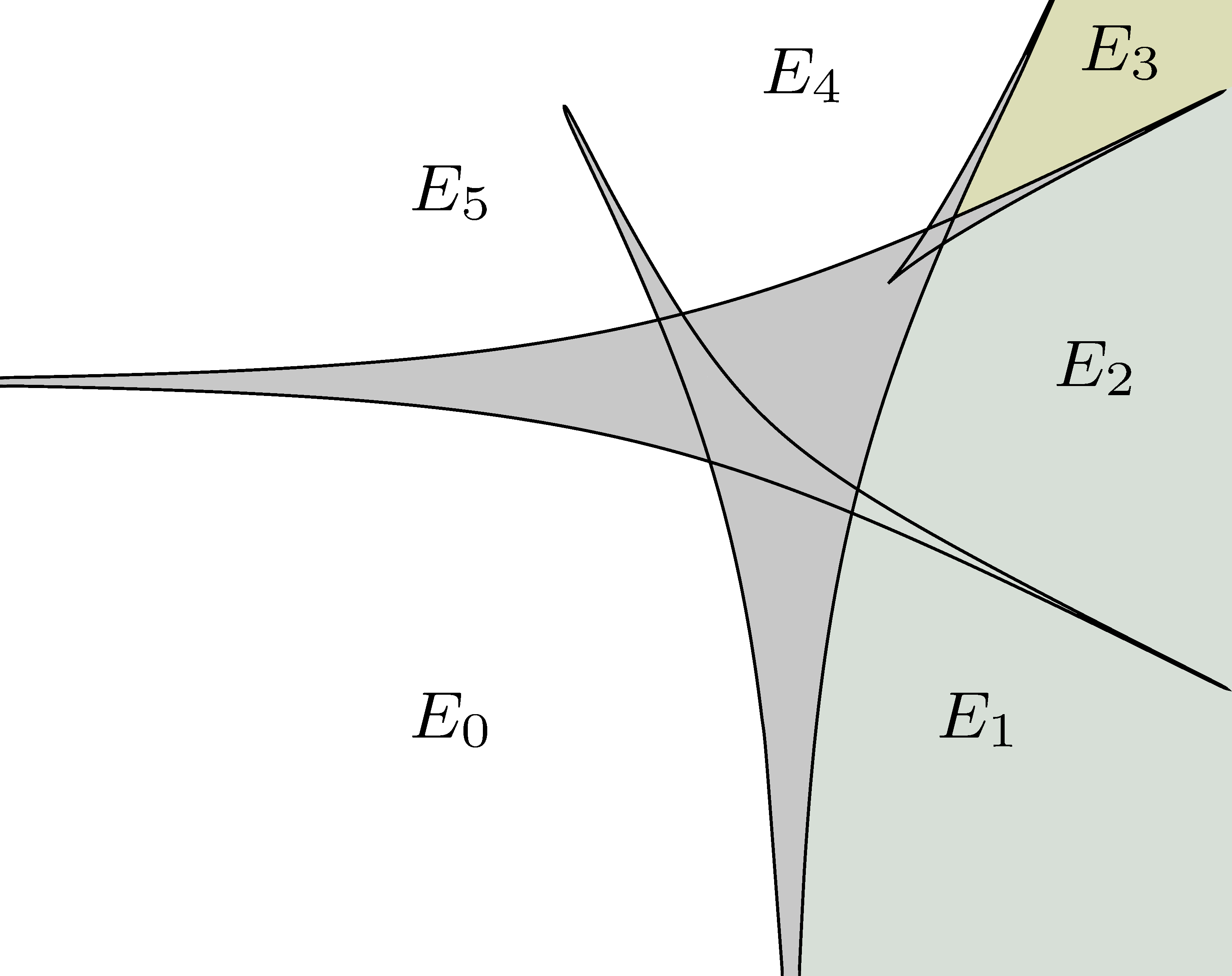}
				\caption{\small{The discriminant amoeba of the system (\ref{eq1}) and its complement components $E_\nu$.}}
				\label{ris:pic_am}
				\end{center}
				\end{minipage}
				\hfill
				\begin{minipage}[h]{0.45\linewidth}
					\begin{center}
					\includegraphics[width=0.8\textwidth]{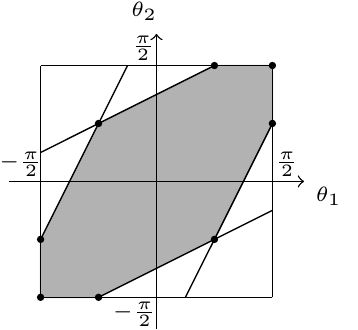}
					\caption{\small{The domain $\Theta.$ \newline \qquad}}
					\label{ris:picO_S}
					\end{center}
				\end{minipage}
			\end{center}					
	\end{figure}

	\section{Taylor series for monomials of solutions to reduced systems}
	
	We consider the system of $n$ trinomials (\ref{tri}) with unknowns~$y_1, \ldots, y_n$, variable coefficients $a=(\ldots, a^{(i)}_\alpha, \ldots)$ and the set of supports $A^{(1)},\ldots, A^{(n)},$ the same as the system~\eqref{ptiv_isk_sis} has. 
	
Let us denote by $A$ the disjunctive union of sets~$A^{(i)}$.  It consists of $3n$ elements, and  we interpret it as the $(n \times 3n)$ -- matrix
\begin{equation*}
A={\left( A^{(1)}, \ldots, A^{(n)} \right)}= { \left( \alpha^1, \ldots, \alpha^{3n} \right) },
\end{equation*}
with columns~$\alpha^k$ which are exponents of monomials of the system~(\ref{tri}). We order elements~$\alpha \in A$, and, correspondingly, coefficients~$a^{(i)}_{\alpha}$, $\alpha\in A$ of the system~(\ref{tri}). The set of coefficients  $a=(a_{\alpha})$ is a vector space~$\mathbb{C}^A \simeq \mathbb{C}^{3n}$.
	
	The system~\eqref{tri} can be reduced by an appropriate change of coefficients in such a way that only one variable coefficient remains in each equation, and the other ones will be constant as in the system~\eqref{ptiv_isk_sis}.  Therein,  supports ~$A^{(1)}, \ldots, A^{(n)}$ remain the same, and the solution to the system~(\ref{tri}) can be recovered by the solution to any reduced system.  On the whole, the reduction procedure (dehomogenization) of the system is based on the polyhomogeniety property of the solution $y(a)=(y_1(a), \ldots, y_n(a))$, which can be expressed as follows:
        \begin{equation}
        \label{polyhomogeneous}
	y{ \left( \ldots \lambda_0^{(i)} \lambda^{\alpha} a_{\alpha}^{(i)} \ldots \right) }={ \left( \lambda_1^{-1} y_1 {\left( .. a_{\alpha}^{(i)} .. \right) }, \ldots, \lambda_n^{-1} y_n {\left( .. a_{\alpha}^{(i)} .. \right) } \right) },
	\end{equation}
	where $\lambda_0 = \left( \lambda_0^{(1)}, \ldots, \lambda_0^{(n)} \right)$, $\lambda=(\lambda_1, \ldots, \lambda_n)\in { \left( \mathbb{C} \setminus 0 \right) }^n$, see \cite{ATs12}.

	In each set~$A^{(i)}$ we fix a pair of elements~$\mu^{(i)},\,\, \nu^{(i)}$ and form the $n \times n$--matrix
	\begin{equation}
	\label{matrica}
	\varkappa := { \left( \mu^{(1)}- \nu^{(1)}, \ldots, \mu^{(n)}- \nu^{(n)}  \right) }
	\end{equation}
	with columns~$\mu^{(i)}- \nu^{(i)}$. The matrix~$\varkappa$ is assumed to be nondegenerate. Each fixed set of $n$ pairs~$\mu^{(i)},\, \nu^{(i)}$ corresponds to the reduced system of trinomials 
	\begin{equation}
	\label{pri_sis}
	r_{\beta^{(i)}}^{(i)}y^{\beta^{(i)}} + y^{\mu^{(i)}} - y^{\nu^{(i)}}=0,\,\,\, i=1,\ldots, n,
	\end{equation}
	with new unknown~$y=(y_1, \ldots, y_n)$, $\beta^{(i)}\in A^{(i)}$ and variable coefficients~$r={ \left( r_{\beta^{(i)}}^{(i)} \right) } \in  \mathbb{C}^n$. In each set~$A^{(i)}$, we can choose an unordered pair~$\mu^{(i)}, \nu^{(i)}$ in three ways. Hence, we consider at most $3^n$ ways of the reduction the system~(\ref{tri}) to the form~\eqref{pri_sis}.
	If $\mu^{(i)}=\omega^{(i)}$, $\nu^{(i)}=\overline{0}$ and $\beta^{(i)}=\sigma^{(i)}$ for all $i\in\left\{1, \ldots, n\right\}$, then we get the system (\ref{ptiv_isk_sis}).
	
	Consider a branch of the solution to the system~(\ref{pri_sis}) under condition $y_i(0)=1$ and call it the principle solution. For the vector $d=(d_1, \ldots, d_n) \in \mathbb{R}^n_+$ we introduce the monomial function ${y}^d(r):=y_1^{d_1}(r) \cdot \ldots \cdot y_n^{d_n}(r)$ of coordinates of the principal solution to the system~(\ref{pri_sis}). Concerning the system (\ref{pri_sis}), we use the following notations:  ~$\beta$ is the matrix formed of columns $\beta^{(i)}$ and  $\overline{\beta}$ is the matrix with columns $\left( \beta^{(i)} - \nu^{(i)} \right)$. Moreover, the symbol $\Gamma(b)$  we will use for the short writing of the product $\prod\limits_{k=1}^{n} \Gamma\left(b_k\right)$, where $b=\left(b_1,\ldots,b_n\right)$ is a vector. The diagonal matrix with components of the vector $b$  on the main diagonal we denote by ${\mbox{\text{diag}}}[b]$ and the $I$ denotes the vector with unit coordinates.

	\begin{thm}
	\label{thm_1}
	The monomial~${y}^d(r)$ of the principle solution to the system~(\ref{pri_sis}) admits the Taylor series representation
	with coefficients
	\begin{equation}
	\label{kf_ck_2}
	c_k=\frac{{(-1)}^{ \vert k \vert }}{k!} \frac{ \Gamma { \left( \varkappa^{-1} d + \varkappa^{-1} \overline{\beta} k \right) } }{ \Gamma { \left( \varkappa^{-1} d + \varkappa^{-1} \overline{\beta} k -k + I \right) } } Q(k),\,\, k\in{\mathbb Z}_{\geqslant}^{n},
	\end{equation}
	where $Q(k)$ is the determinant of the matrix ${\left( \diag {  \left[ \varkappa^{-1} d + \varkappa^{-1} \overline{\beta} k \right] }  - { \varkappa^{-1}  \overline{\beta} \, \diag {[k]}} \right)}$,  $k!:=k_1! \cdot \ldots \cdot k_n!$ and $\mid k \mid:=k_1 + \ldots +k_n$.
	\end{thm}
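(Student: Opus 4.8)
The plan is to prove the theorem in three moves: linearize the reduced system (\ref{pri_sis}) by a monomial change of the unknown, express the Taylor coefficients as an iterated residue via the logarithmic residue formula \cite{Yuzhakov}, and evaluate that residue by a multilinear expansion of a determinant. First I would dehomogenize each equation along the lines of (\ref{polyhomogeneous}): dividing the $i$th equation of (\ref{pri_sis}) by $y^{\nu^{(i)}}$ turns it into $y^{\mu^{(i)}-\nu^{(i)}}=1-r_{\beta^{(i)}}^{(i)}\,y^{\overline{\beta}^{(i)}}$. Introducing new variables $w_i:=y^{\mu^{(i)}-\nu^{(i)}}$, i.e. $\log w=\varkappa^{\top}\log y$, which is invertible because $\varkappa$ is nondegenerate, the system linearizes to
\begin{equation*}
w_i = 1 - r_{\beta^{(i)}}^{(i)}\, w^{(\varkappa^{-1}\overline{\beta})^{(i)}}, \qquad i = 1,\ldots,n,
\end{equation*}
while the monomial becomes $y^d=w^{\varkappa^{-1}d}$. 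The principal solution corresponds to the branch $w(0)=I$; since the Jacobian of $w\mapsto w-I+(\,\ldots\,)$ at $(w,r)=(I,0)$ is the identity, $w(r)$ and hence $y^d(r)$ is holomorphic near the origin, so the Taylor series exists and its coefficients are uniquely determined.

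Next I would compute the coefficients through Cauchy's formula, $c_k=(2\pi i)^{-n}\oint w(r)^{\varkappa^{-1}d}\,r^{-k-I}\,dr$, and pass to the variable $w$ via the inverse map $r_{\beta^{(i)}}^{(i)}=(1-w_i)\,w^{-(\varkappa^{-1}\overline{\beta})^{(i)}}$, which carries a small torus around $w=I$ to a small torus around $r=0$; this is exactly the logarithmic residue formula applied to the linearized system. Expressing the pullback through the logarithmic differentials $dr_i/r_i$ and $dw_i/w_i$, the Jacobian reorganizes into a single determinant, and after the substitution $w=I-t$ the coefficient becomes the residue
\begin{equation*}
c_k = \frac{1}{(2\pi i)^n}\oint \frac{(1-t)^{\,a-I}\,\det\bigl(\diag[\,1-t\,]+\diag[\,t\,]\,\varkappa^{-1}\overline{\beta}\bigr)}{t_1^{k_1+1}\cdots t_n^{k_n+1}}\,dt,\qquad a:=\varkappa^{-1}d+\varkappa^{-1}\overline{\beta}\,k,
\end{equation*}
so that $c_k$ is the coefficient of $t^k$ in the numerator. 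It is worth noting that the vector $a=\varkappa^{-1}d+\varkappa^{-1}\overline{\beta}k$ appearing here is precisely the argument of the $\Gamma$-factors in (\ref{kf_ck_2}), which is the first good sign that the bookkeeping is correct.

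Finally I would evaluate this coefficient. Expanding the determinant by multilinearity in its rows collapses it to $\sum_S m_S\prod_{i\in S}t_i\prod_{i\notin S}(1-t_i)$, where $S$ runs over subsets of $\{1,\ldots,n\}$ and $m_S$ is the principal minor of $\varkappa^{-1}\overline{\beta}$ indexed by $S$. Multiplying by $(1-t)^{a-I}$ makes the integrand separate across coordinates: the factor attached to $i$ is $(1-t_i)^{a_i}$ when $i\notin S$ and $t_i(1-t_i)^{a_i-1}$ when $i\in S$. Extracting $[t_i^{k_i}]$ from each factor produces the binomial/Gamma ratios, and after pulling out the common factor $\tfrac{(-1)^{|k|}}{k!}\,\Gamma(a)/\Gamma(a-k+I)$ what remains is the sum $\sum_S m_S\prod_{i\in S}(-k_i)\prod_{i\notin S}a_i$.

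The crux, and the step I expect to be the main obstacle, is to recognize this last combinatorial sum as $Q(k)$. The identification comes from running the same multilinear expansion in reverse, now columnwise on $\diag[a]-\varkappa^{-1}\overline{\beta}\,\diag[k]$: its $j$th column is $a_j e_j-k_j(\varkappa^{-1}\overline{\beta})_{\cdot j}$, and expanding the determinant reproduces exactly $\sum_S m_S\prod_{i\in S}(-k_i)\prod_{i\notin S}a_i$, so the leftover factor equals $\det\bigl(\diag[\varkappa^{-1}d+\varkappa^{-1}\overline{\beta}k]-\varkappa^{-1}\overline{\beta}\,\diag[k]\bigr)=Q(k)$. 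Assembling the common $\Gamma$-ratio with $Q(k)$ yields formula (\ref{kf_ck_2}); the care required is entirely in matching signs and in the two symmetric determinant expansions, the rest being routine residue computation.
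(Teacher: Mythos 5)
Your proposal is correct and follows essentially the same route as the paper: linearize the reduced system by the monomial substitution $w=y^{\varkappa}$, represent the Taylor coefficients by the logarithmic residue (Cauchy) formula with the Jacobian determinant in the kernel, and evaluate by multilinearity of the determinant, recognizing the resulting sum over principal minors of $\varkappa^{-1}\overline{\beta}$ as the expansion of $Q(k)$. The only differences are cosmetic: the paper works with $W=y^{-\varkappa}=I+\xi$ rather than your $w=I-t$ and extracts $c_k$ as a $k$-th derivative at $\xi=0$ while keeping the determinant form throughout, whereas you read off the coefficient of $t^k$ and re-assemble $Q(k)$ from its minor expansion --- the computations are identical in substance.
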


	\begin{proof}
Following \cite{ATs12}, we carry out the linearization of the system \eqref{pri_sis}. For that we regard  \eqref{pri_sis} as a system of equations in the space $\mathbb{C}^n_r\times\mathbb{C}^n_y$ with coordinates $r=(r_{\alpha^{(i)}}^{(i)}),$ $y=\left( y_1,\ldots,y_n\right)$, and introduce in $\mathbb{C}^n\times\mathbb{C}^n$ the change of variables  $(\xi,W) \rightarrow (r,y)$ by setting
\begin{equation}
y = W^{- \varkappa^{-1}},
\;\;
r= \xi \odot W^{\varkappa^{-1} \bar\beta - E},
\label{linearize}
\end{equation}
where $\xi=(\xi_1,\dots , \xi_n)$, $W=(W_1,\dots , W_n)$, $\odot$ denotes the Hadamard (coordinate-wise) product and $E$ is the unit matrix. As a result of this change of variables, the system \eqref{pri_sis} can be written in the vector form as follows   
\begin{equation}
W=\xi+I.
\label{lin_system}
\end{equation}
Equations of the system \eqref{lin_system} are linear, so the change of variables \eqref{linearize} is called the linearization. Coordinates of the solution to the system \eqref{pri_sis} in new variables $\xi=(\xi_1, \ldots, \xi_n)$, $W=(W_1, \ldots, W_n)$ take the form
$$ y_j(r(\xi)) = \left(W_1, \ldots, W_n\right)^{-\left(\varkappa^{-1}\right)^{(j)}} ,$$
where $W_i=1+\xi_i,$  $\left(\varkappa^{-1}\right)^{(j)}$ is the $j$th column of the inverse matrix $\varkappa^{-1}$ for the matrix $\varkappa$.

	We represent the inversion $\xi(r)$ of the linearization \eqref{linearize} as an implicit mapping given by the following set of equations
	\begin{equation}
	\label{map_F}
	F(\xi,r) = 
	\left(F_1(\xi,r),\ldots,F_n(\xi,r) \right) = 
	\xi \odot W^{\varkappa^{-1} \bar\beta - E} - r = 0.
	\end{equation}
Calculate the vector $y(\xi)$ at the value of the mapping $\xi(r)$. To this end, following the idea implemented in \cite {KuSt} for a system of polynomials with a diagonal matrix $ \omega $, we apply the logarithmic residue formula, see \cite[Th. 20.1, 20.2]{Yuzhakov}. It yields the following integral

	\begin{equation*}
	\label{po}
{y}^d(r) = \frac{1}{(2\pi i)^n}\int\limits_{\Gamma_\varepsilon}\frac{{y}^d(\xi) \Delta(\xi) d\xi}{F(\xi,r)},
	\end{equation*}
where $\Gamma_\varepsilon = \{\xi\in \mathbb{C}^n : |\xi_j|=\varepsilon, j=1,\ldots,n\},$ $\Delta(\xi)$ is the Jacobian of the mapping (\ref{map_F}) with respect to $\xi$ and $F(\xi, r)$ denotes the product $F_1(\xi,r) \cdot \ldots \cdot F_n (\xi, r)$. The radius $\varepsilon$ we choose in such a way that the corresponding polycylinder lies outside the zero set of the Jacobian~$\Delta(\xi)$.     

	\begin{lemm}
	The Jacobian of the mapping $F(\xi,r)$ with respect to $\xi$ is

	\begin{equation*}
	\Delta(\xi) = 
	W^{(\varkappa^{-1} \bar\beta) I - 2I}
	\det \left( E+\diag[\xi]\varkappa^{-1}\bar\beta \right).
	\end{equation*}
	\end{lemm}

	\begin{proof}[Proof of Lemma 1]

The $j$th component of the mapping $F(\xi,r)$ has the following form:
$$
F_j = F_j(\xi,r)= \xi_j \prod\limits_{k=1}^{n} W_k^{(\varkappa^{-1} \bar\beta - E)_k^{(j)}} - r_j.
$$
The calculation of the derivative of $F_j$ with respect to $\xi_j$ looks as follows:
\begin{equation*}
\begin{split}
\frac{\partial F_j}{\partial \xi_j} =& \prod\limits_{k=1}^n W_k^{(\varkappa^{-1}\bar\beta - E)_k^{(j)}}+\xi_j (\varkappa^{-1}\bar\beta - E)_j^{(j)} \prod\limits_{k=1}^n W_k^{(\varkappa^{-1}\bar\beta - E)_k^{(j)} - \delta_k^j}=
\\
=&(1+\xi_j (\varkappa^{-1} \bar\beta)^{(j)}_j) \prod\limits_{k=1}^n W_k^{(\varkappa^{-1} \bar\beta)_k^{(j)}-2\delta_k^j},
\end{split}
\end{equation*}
and the derivative with respect to $\xi_i$, when $i\neq j$, is equal to

\begin{equation*}
\frac{\partial F_j}{\partial \xi_i} =  \xi_j (\varkappa^{-1}\bar\beta - E)_i^{(j)} \prod\limits_{k=1}^n W_k^{(\varkappa^{-1}\bar\beta)_k^{(j)} - \delta_k^j - \delta_k^i},
\end{equation*}
where $\delta_k^j$, $\delta_k^i$ denote the Kronecker symbols.

Extracting common factors in the rows and columns of the obtained determinant, we get the assertion of the lemma.
	\end{proof}

Remark that at the origin the Jacoby matrix for the mapping $F(\xi, r)$ is the unit matrix. Hence, the Jacobian $\Delta(\xi)$ does not vanish in the neighbourhood of the origin and conditions of Theorems 20.1, 20.2 from \cite{Yuzhakov} hold.

The monomial $y^d(r)$ after the change of variables takes the following form:

	\begin{equation*}
y^d(\xi) = W^{-\varkappa^{-1}d}.
	\end{equation*}

\noindent Consequently, application of the logarithmic residue formula yields the integral representation:
	\begin{equation}\label{i_logres}
y^d (r) = \frac{1}{{(2 \pi i)}^n} \int\limits_{\Gamma_\varepsilon} \frac{ W^{-\varkappa^{-1}d + (\varkappa^{-1} \bar\beta) I - 2 I}}{F(\xi,r)}\det \left( E+\diag[\xi]\varkappa^{-1}\bar\beta \right) d\xi.
	\end{equation}

Expand the kernel of the integral (\ref{i_logres}) into a multiple geometric series. To this end, we use the coordinate notations:

\begin{equation*}
\begin{split}
y^d(r) = \frac{1}{{(2 \pi i)}^n} \int\limits_{\Gamma_\varepsilon} \frac{ W^{-\varkappa^{-1}d + (\varkappa^{-1} \bar\beta) I - 2 I}}{
\prod\limits_{j=1}^{n}\left(\xi_j \prod\limits_{k=1}^{n} W_k^{(\varkappa^{-1} \bar\beta - E)_k^{(j)}} - r_j\right)
}\det \left( E+\diag[\xi]\varkappa^{-1}\bar\beta \right) d\xi \\
=\frac{1}{{(2 \pi i)}^n} \int\limits_{\Gamma_\varepsilon} 
\frac{
W^{-\varkappa^{-1}d + (\varkappa^{-1} \bar\beta) I - 2 I}
}
{
W^{(\varkappa^{-1}\bar\beta) I - I}  \prod\limits_{j=1}^{n} \xi_j \left(1 - \frac{r_j}{\xi_j \prod\limits_{k=1}^{n} W_k^{(\varkappa^{-1} \bar\beta - E)_k^{(j)}}}\right)
}
\det \left( E+\diag[\xi]\varkappa^{-1}\bar\beta \right) d\xi \\
=\frac{1}{{(2 \pi i)}^n} \int\limits_{\Gamma_\varepsilon} 
\frac{
W^{-\varkappa^{-1}d - I}
}
{
\prod\limits_{j=1}^{n} \xi_j \left(1 - \frac{r_j}{\xi_j \prod\limits_{k=1}^{n} W_k^{(\varkappa^{-1} \bar\beta - E)_k^{(j)}}}\right)
}
\det \left( E+\diag[\xi]\varkappa^{-1}\bar\beta \right) d\xi.
\end{split}
\end{equation*}

	Since there exists such a number $\delta$ that for all $\xi \in \Gamma_\varepsilon$ and $\Vert r \Vert < \delta$ the inequality
	$$
	\frac{r_j}{\xi_j \prod\limits_{k=1}^{n} W_k^{(\varkappa^{-1} \bar\beta - E)_k^{(j)}}}<1
	$$
is valid, the integral (\ref{i_logres}) admits the following representation: 

	\begin{equation*}
y^d(r)=\frac{1}{{(2 \pi i)}^n} \int\limits_{\Gamma_\varepsilon} 
\frac{
W^{-\varkappa^{-1}d  - I} \det \left( E+\diag[\xi]\varkappa^{-1}\bar\beta \right) 
}
{
\prod\limits_{j=1}^{n} \xi_j 
}
\left(\sum\limits_{k\in\mathbb{Z}_\geqslant^n }\prod\limits_{j=1}^n\left(\frac{r_j}{\xi_j W^{(\varkappa^{-1} \bar\beta - E)^{(j)}}}\right)^{k_j}\right)
d\xi.
	\end{equation*}

Changing the order of summation and integration in the last integral, we get the series

	\begin{equation*}
y^d(r)=
\sum\limits_{k\in\mathbb{Z}^n_{\geqslant} }
\left(
\frac{1}{{(2 \pi i)}^n} \int\limits_{\Gamma_\varepsilon} 
\frac{
W^{-\varkappa^{-1}(d+ \bar\beta k)   + k- I}
}
{
\xi^{k+I}
}
\det \left( E+\diag[\xi]\varkappa^{-1}\bar\beta \right)
d\xi \right)r^k.
	\end{equation*}

\noindent The coefficient $ c_k $ of the series is determined by the expression in parentheses. It can be calculated by the Cauchy integral formula. As a result, we get:

	\begin{equation*}
c_k =
\frac{1}{k!}
\frac{\partial^k}{\partial \xi^k}
\left(
W^{-\varkappa^{-1}(d+ \bar\beta k)   + k- I}
\det \left( E+\diag[\xi]\varkappa^{-1}\bar\beta\right) \right)\Big\vert_{\xi=0}.
	\end{equation*}
We bring the factor  $W^{-\varkappa^{-1}(d+ \bar\beta k)   + k- I}$ into the determinant in such a way that each row of it still to depend on one variable $\xi_j$. We obtain

	\begin{equation*}
c_k =
\frac{1}{k!}
\frac{\partial^k}{\partial \xi^k}
\det 
\left(
\diag\left[W^{\diag\left[-\varkappa^{-1}(d+ \bar\beta k)   + k- I\right]}\right]\times
\left(E+\diag[\xi]\varkappa^{-1}\bar\beta\right)
\right) 
\Big\vert_{\xi=0}.
	\end{equation*}
We next use the multilinearity property of the determinant and the fact that each row depends only on one variable $\xi_j$. As a result, we have

	\begin{equation*}
c_k =
\frac{1}{k!}
\det 
\left\Vert
\left.
\frac{\partial^{k_j}}{\partial \xi_j^{k_j}}
W_j^{\left(-\varkappa^{-1}(d+ \bar\beta k)\right)_j+k_j-1}\left(\delta_i^j + \xi_j (\varkappa^{-1}\bar\beta)_j^{(i)}\right)\right\vert_{{\xi_j}=0}
\right\Vert_{i,j=1}^n
.
	\end{equation*}

Finally, we perform calculations in the above determinant:
\begin{equation*}
\begin{split}
&\left.
 \frac{\partial^{k_j}}{\partial \xi_j^{k_j}}
W_j^{\left(-\varkappa^{-1}(d+ \bar\beta k)\right)_j+k_j-1}\left(\delta_i^j + \xi_j (\varkappa^{-1}\bar\beta)_j^{(i)}\right)\right\vert_{{\xi_j}=0}
\\
=& (-1)^{k_j} 
\left(\left(\varkappa^{-1}(d+ \bar\beta k)\right)_j\delta_i^j - k_j (\varkappa^{-1}\bar\beta)_j^{(i)}\right)  
\prod\limits_{m=1}^{k_j-1} \left( \left(\varkappa^{-1}(d+ \bar\beta k)\right)_j-k_j+m \right)
\\
=&
(-1)^{k_j}\frac{ \Gamma\left( \left(\varkappa^{-1}(d+ \bar\beta k)\right)_j \right)}{\Gamma\left( \left(\varkappa^{-1}(d+ \bar\beta k)\right)_j-k_j+1 \right) }\left(\left(\varkappa^{-1}(d+ \bar\beta k)\right)_j\delta_i^j - k_j(\varkappa^{-1}\bar\beta)_j^{(i)}\right).
\end{split}
\end{equation*}

\noindent Taking out the common factor in each row of the determinant and taking into account the factor ~$\frac{1}{k!}$, we get the view of the coefficient $c_k$ declared in formula~(\ref{kf_ck_2}).
	\end{proof}

	Coefficients of the Taylor series for the monomial~${y}^d (x)$ of the principal solution to the system~(\ref{ptiv_isk_sis}) one can find by formula~(\ref{kf_ck_2}) setting $\varkappa = \omega$, $\overline{\beta} = \sigma$. Thus, the series is as follows:
	\begin{equation}
	\label{rad_it}
	{y}^d (x)=\sum \limits_{k \in \mathbb{Z}^n_{\geqslant}} \frac{{(-1)}^{ \mid k \mid  }}{k!} \frac{ \Gamma {( \omega^{-1} d + \omega^{-1} \sigma k )} }{ \Gamma {( \omega^{-1} d + \omega^{-1} \sigma k - k +I )} } P(k)x^k,
	\end{equation}
	where $P(k)=\det {\left( \diag {  \left[ \omega^{-1} d + \omega^{-1} \sigma k \right] }  - { \omega^{-1}  \sigma \, \diag {[k]}} \right)}$.
	\section{Puiseux series }
	We fix $n$ couples $\mu^{(i)}, \,\, \nu^{(i)} \in A^{(i)}$ of exponents of the system~(\ref{ptiv_isk_sis}) and compose the matrix
	\begin{equation*}
	\varkappa= { \left( \varkappa^{(i)}_j \right) } = { \left( \mu^{(i)}_j - \nu^{(i)}_j \right) },
	\end{equation*}
	assuming that it is nondegenerate. In accordance with the choice of the set of pairs $\mu^{(i)}$, $\nu^{(i)}$, let us devide the set $\{ 1, \ldots , n \}$ on three disjoint subsets:
	\begin{equation}
	\label{3_pod}
	\begin{split}
	&J= \{ j: \nu^{(j)}=\overline{0},\,\, \mu^{(j)}=\omega^{(j)} \},\\{}
	&L= \{ l: \nu^{(l)}=\overline{0},\,\, \mu^{(l)}=\sigma^{(l)} \},\\{}
	&T= \{ t: \nu^{(t)}=\sigma^{(t)},\,\, \mu^{(t)}=\omega^{(t)} \}.
	\end{split}	
	\end{equation}
	
	We introduce two matrices
	\begin{equation*}
	\Phi:= \varkappa^{-1} \cdot \sigma, \,\,\, \Psi:= \varkappa^{-1} \cdot \omega,
	\end{equation*}
	with rows $\varphi_1, \ldots, \varphi_n$ and $\psi_1, \ldots, \psi_n$ respectively. Moreover, we consider truncated rows
	\begin{equation*}
	\varphi_l^J,\, \psi_l^L,\, \psi_l^T,\,\, l\in L,
	\end{equation*}
	\begin{equation*}
	\varphi_t^J,\, \psi_t^L,\, \psi_t^T,\,\, t\in T,
	\end{equation*}
	which consist of entries of rows $\varphi_l$, $\psi_l$, $l\in L$ and $\varphi_t$, $\psi_t$, $t\in T$ indexed by elements of sets $J,\,L$ and $T$. Respectively, we introduce truncated vectors $k^J$, $k^L$, $k^T$  for the vector $k=( k_1, \ldots, k_n )$. The scalar product of vectors we denote as follows $\langle \cdot, \cdot  \rangle$.
	
	\begin{thm}
	\label{thm_P}
	For any collection of $n$ couples $\mu^{(i)},\, \nu^{(i)} \in A^{(i)}$ with the nondegeneracy condition of the corresponding matrix $\varkappa$ there exist an analytic continuation of the Taylor series for the monomial $y^d(x)$ of the principal solution to the system~(\ref{ptiv_isk_sis}) in the form of the Puiseux series
	\begin{equation*}
	\sum \limits_{k \in \mathbb{Z}^n_{\geqslant}} \tilde{c_k} x^{m(k)},
	\end{equation*}
	which has the support consisting of points $m(k)=\left( m_1(k), \ldots, m_n(k) \right)$ with coordinates
	
	\begin{equation*}
	\begin{aligned}
	&m_j(k)=k_j, \,\, j \in J,\\
	&m_l(k)= - { \langle \varphi_l^J, k^J \rangle }-{ \langle \psi_l^L, k^L \rangle } + { \langle \psi_l^T, k^T \rangle }- { \langle d, \varkappa^{-1}_l \rangle },\,\, l \in L,\\
	&m_t(k)= { \langle \varphi_t^J, k^J \rangle }+{ \langle \psi_t^L, k^L \rangle } - { \langle \psi_t^T, k^T \rangle }+ { \langle d, \varkappa^{-1}_t \rangle }, \,\, t \in T,
	\end{aligned}
	\end{equation*}
	and coefficients $\tilde{c}_k$ expressed in terms of coefficients (\ref{kf_ck_2}) as follows
	\begin{equation*}
	\tilde{c}_k= e^{i\pi\sum\limits_{t\in T}\left(k_t+m_t(k)\right)}c_k.
	\end{equation*}
	\end{thm}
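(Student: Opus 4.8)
The plan is to realise the asserted Puiseux series as the Taylor series that Theorem~1 already provides for the reduced system \eqref{pri_sis} attached to the chosen pairs $\mu^{(i)},\nu^{(i)}$, and then to rewrite it in the original coordinates $x$ by means of the polyhomogeneity relation \eqref{polyhomogeneous}. Both \eqref{ptiv_isk_sis} and \eqref{pri_sis} are dehomogenizations of one and the same universal system with the common supports $A^{(1)},\dots,A^{(n)}$, so their principal solutions differ only by a monomial rescaling of $y$ together with a monomial change of the single variable coefficient in each equation. Once this rescaling is made explicit, substituting the coefficients \eqref{kf_ck_2} for $y^d(r)$ and expanding the monomial prefactor turns the Taylor series into a Puiseux series in $x$; matching exponents and phases against the asserted formulas for $m(k)$ and $\tilde c_k$ finishes the argument.

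First I would determine the rescaling. Writing the three coefficients of the $i$th equation of \eqref{ptiv_isk_sis} as $a^{(i)}_{\omega^{(i)}}=1$, $a^{(i)}_{\sigma^{(i)}}=x_i$, $a^{(i)}_{\overline 0}=-1$ and applying the substitution $a^{(i)}_\alpha\mapsto \lambda_0^{(i)}\lambda^{\alpha}a^{(i)}_\alpha$ from \eqref{polyhomogeneous}, I impose that the two coefficients of \eqref{pri_sis} that must be normalised, namely those of $y^{\mu^{(i)}}$ and $y^{\nu^{(i)}}$, become $+1$ and $-1$. Solving these $2n$ equations case by case over $J$, $L$, $T$ yields $\lambda_0^{(j)}=1$ and, what is crucial, a uniform description of the monomials $w_i:=\lambda^{\varkappa^{(i)}}$, namely $w_j=1$ for $j\in J$, $w_l=1/x_l$ for $l\in L$, and $w_t=-x_t$ for $t\in T$, where $\varkappa^{(i)}=\mu^{(i)}-\nu^{(i)}$ are the columns of $\varkappa$. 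Since $\varkappa$ is invertible, this fixes $\lambda$ and gives $\lambda^{\alpha}=w^{\varkappa^{-1}\alpha}$ for every exponent $\alpha$. Consequently the remaining variable coefficients are $r^{(j)}=w^{\Phi^{(j)}}x_j$, $r^{(l)}=w^{\Psi^{(l)}}$, $r^{(t)}=-w^{-\Psi^{(t)}}$, with $\Phi^{(i)},\Psi^{(i)}$ the $i$th columns of $\Phi=\varkappa^{-1}\sigma$ and $\Psi=\varkappa^{-1}\omega$, while \eqref{polyhomogeneous} gives $y^d(x)=\lambda^{d}\,y^d(r)=w^{\varkappa^{-1}d}\,y^d(r)$.

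Second, I would substitute the Taylor series $y^d(r)=\sum_k c_k r^k$ of Theorem~1 and reduce everything to powers of $x$ through the single identity $w^{v}=e^{i\pi\sum_{t\in T}v_t}\prod_{l\in L}x_l^{-v_l}\prod_{t\in T}x_t^{v_t}$, valid because $w_j=1$, $w_l=x_l^{-1}$, $w_t=-x_t$. Collecting the $w$-exponents contributed by $r^k$ and by the prefactor $w^{\varkappa^{-1}d}$ into the single vector $E=\varkappa^{-1}d+\sum_{j\in J}\Phi^{(j)}k_j+\sum_{l\in L}\Psi^{(l)}k_l-\sum_{t\in T}\Psi^{(t)}k_t$ and reading off its components row by row reproduces the stated exponents: $m_j(k)=k_j$ for $j\in J$ because $w$ carries no $x_j$, while $-E_l$ and $E_t$ give the combinations for $l\in L$ and $t\in T$, since $(\Phi^{(j)})_l=(\varphi_l)_j$ and $(\Psi^{(\cdot)})_l=(\psi_l)_{\cdot}$ truncate precisely to $\langle\varphi_l^J,k^J\rangle,\langle\psi_l^L,k^L\rangle,\langle\psi_l^T,k^T\rangle$ and the terms $\langle d,\varkappa^{-1}_l\rangle$. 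Finally the sign $(-1)^{\sum_{t\in T}k_t}$ produced by the $T$-factors of $r^k$, combined with the phase $e^{i\pi\sum_{t\in T}E_t}=e^{i\pi\sum_{t\in T}m_t(k)}$ coming from $w_t=-x_t$, yields $\tilde c_k=e^{i\pi\sum_{t\in T}(k_t+m_t(k))}c_k$, as claimed.

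The computations in the second step are routine bookkeeping; the points that genuinely require care are the consistent choice of branch for the fractional monomials, so that the factors $(-1)^{v_t}=e^{i\pi v_t}$ and the prefactor $w^{\varkappa^{-1}d}$ are single-valued on the relevant sector, and the verification that the resulting convergent Puiseux series is indeed an analytic continuation of the Taylor series \eqref{rad_it}. For the latter I would argue that $y^d(r)$ is holomorphic near $r=0$ by Theorem~1, that the monomial map $x\mapsto r(x)$ is biholomorphic on a suitable subdomain of $(\Complex\setminus 0)^n$, and that both series represent branches of the same multivalued algebraic function $y^d$ off the discriminant locus, whence they continue one another along paths in the complement of the discriminant. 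Identifying the domain in $x$-space on which the new series converges and checking that it meets the convergence polydisc of \eqref{rad_it} inside the regular locus is the only substantive obstacle.
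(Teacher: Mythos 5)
Your proposal is correct and follows essentially the same route as the paper: both arguments use the polyhomogeneity relation to produce the monomial substitution $r=r(x)$ together with the monomial prefactor relating $y^d(x)$ to $y^d(r)$, then substitute the Taylor series of Theorem~1 and read off the exponents $m(k)$ and the phase $e^{i\pi\sum_{t\in T}\left(k_t+m_t(k)\right)}$. The only difference is presentational: the paper passes through the universal system~(\ref{tri}) and the Smith normal form of $\varkappa$ to exhibit the branches of the matrix radical $g^{\varkappa^{-1}}$ before specialising, whereas you compute the rescaling directly on the specialised coefficients $\left(1,x_i,-1\right)$, arriving at the same formulas.
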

	
	\begin{proof}
	We start the proof with finding the monomial change of variables ~$r=r(a)$ reducing the system~(\ref{tri}) to the form~(\ref{pri_sis}). To this end, we get the Smith normal form $S_q$ for the matrix~$\varkappa$, multiplying it on the left and right by unimodular matrices~$C$ and $F$ as follows:
	\begin{equation}
	\label{norm_matrix}
	C \varkappa F=S_q,
	\end{equation}
	here the $S_q$ is a diagonal matrix with integers ~$q_1, \ldots, q_n$ on the diagonal, and $q_j \mid q_{j+1},\,\, 1\leqslant j \leqslant n-1$, see \cite{P96}. It follows from (\ref{norm_matrix}) that the inverse matrix $\varkappa^{-1}$ admits the representation
	\begin{equation}
	\label{matrix_kappa}
	\varkappa^{-1}=F S_q^{-1}C.
	\end{equation}
	As it was mentioned above, the solution $y(a)$ of the system~(\ref{tri}) is polyhomogeneous.
	We find the polyhomogeneity parameters $\lambda_0^{(i)}$ and $\lambda=(\lambda_1,\ldots, \lambda_n)$ such that
	\begin{equation}	
	\label{poliod}
		\begin{aligned}
		\lambda_0^{(i)} \lambda^{\mu^{(i)}} a_{\mu^{(i)}}^{(i)}=1,\\
		\lambda_0^{(i)} \lambda^{\nu^{(i)}} a_{\nu^{(i)}}^{(i)}=-1,\\
		\end{aligned}
	\end{equation}
	for $i=1, \ldots, n$. For that, we solve the following system of equations:
	\begin{equation}
	\label{sist_yrav}
	\lambda^{\varkappa^{(i)}} = g_i,\,\, i=1, \ldots, n,
	\end{equation}
	where
	\begin{equation*}
	g_i=-\frac{ a_{\nu^{(i)}}^{(i)} }{ a_{\mu^{(i)}}^{(i)} }.
	\end{equation*}
	Using the relation ~(\ref{matrix_kappa}), we can write the solution of the system~(\ref{sist_yrav}) in the matrix form as follows
	\begin{equation*}
	\lambda=g^{\varkappa^{-1}}=g^{FS_q^{-1} C}={ \left( {\left( g^{f^{(1)}} \right)}^{ \frac{1}{q_1} }, \ldots, {\left( g^{f^{(n)}} \right)}^{ \frac{1}{q_n} } \right) }^C,
	\end{equation*}
	where the vector $g$ has coordinates~$g_i$, and $f^{(1)}, \ldots, f^{(n)}$ are columns of the matrix $F$.
	By choosing for each $i$ all $q_i$ values of the radical~${ \left( g^{f^{(i)}} \right) }^{ \frac{1}{q_i} }$, we yield all branchers of the matrix radical $g^{\varkappa^{-1}} $. 
	There are $\mid \det \varkappa \mid=q_1 \cdot \ldots \cdot q_n$ of them.

	For each $i \in \{ 1, \ldots, n \}$ we find the parameter $\lambda_0^{(i)}$, using one of relations (\ref{poliod}). If $\nu^{(i)}=\overline{0}$, then $\lambda_0^{(i)}= - \frac{1}{a_0^{(i)}}$. For $\mu^{(i)}= \omega^{(i)}$ we get 
	\begin{equation*}
	\lambda_0^{(i)}= \frac{1}{a_{\omega^{(i)}}^{(i)}} \cdot { \left( {\left( g^{f^{(1)}} \right)}^{ \frac{1}{q_1} }, \ldots, {\left( g^{f^{(n)}} \right)}^{ \frac{1}{q_n} } \right) }^{-C \omega^{(i)}}.
	\end{equation*}
	If $i \in J$, then the coefficient $r_{\sigma^{(i)}}^{(i)}$ of the system~(\ref{pri_sis}) can be expressed in terms of coefficients $a$ of the system~(\ref{tri}) in two ways:
	\begin{equation}
	\label{mon_zam}
	\begin{split}
	r_{\sigma^{(i)}}^{(i)}&=-\frac{ a_{\sigma^{(i)}}^{(i)} }{ a_{0}^{(i)} } \cdot { \left( g^{f^{(1)}} \right) }^{ \frac{ \langle c_1, \sigma^{(i)} \rangle }{ q_1 } } \cdot \ldots \cdot { \left( g^{f^{(n)}} \right) }^{ \frac{ \langle c_n, \sigma^{(i)} \rangle }{ q_n } }
	\\
	r_{\sigma^{(i)}}^{(i)}&=\frac{ a_{\sigma^{(i)}}^{(i)} }{ a_{\omega^{(i)}}^{(i)} } \cdot { \left( g^{f^{(1)}} \right) }^{ \frac{ \langle c_1, \sigma^{(i)}-\omega^{(i)} \rangle }{ q_1 } } \cdot \ldots \cdot { \left( g^{f^{(n)}} \right) }^{ \frac{ \langle c_n, \sigma^{(i)}-\omega^{(i)} \rangle }{ q_n } }.
     \end{split}
	\end{equation}
	If $i \in L$, then the coefficient $r_{\omega^{(i)}}^{(i)}$ of the system~(\ref{pri_sis}) can be expressed in terms of coefficients $a$ of the system~(\ref{tri}) as follows
	\begin{equation}
	\label{mon_zam_3}
	r_{\omega^{(i)}}^{(i)}=-\frac{ a_{\omega^{(i)}}^{(i)} }{ a_{0}^{(i)} } \cdot { \left( g^{f^{(1)}} \right) }^{ \frac{ \langle c_1, \omega^{(i)} \rangle }{ q_1 } } \cdot \ldots \cdot { \left( g^{f^{(n)}} \right) }^{ \frac{ \langle c_n, \omega^{(i)} \rangle }{ q_n } }.
	\end{equation}
	For $i \in T$ the relation is as follows
	\begin{equation}
	\label{mon_zam_4}
	r_{\overline{0}}^{(i)}=\frac{ a_{\overline{0}}^{(i)} }{ a_{\omega^{(i)}}^{(i)} } \cdot { \left( g^{f^{(1)}} \right) }^{ -\frac{ \langle c_1, \omega^{(i)} \rangle }{ q_1 } } \cdot \ldots \cdot { \left( g^{f^{(n)}} \right) }^{ -\frac{ \langle c_n, \omega^{(i)} \rangle }{ q_n } }.
	\end{equation}
	In formulae (\ref{mon_zam})--(\ref{mon_zam_4}) vectors $c_1, \ldots, c_n$ are rows of the matrix~$C$.
	
	In particular, if for all $i \in \{ 1, \ldots, n \}$ we choose $\mu^{(i)}=\omega^{(i)}$, $\nu^{(i)}=\overline{0}$, then $L = \varnothing$, $T = \varnothing$ and  $\varkappa=\omega$. The matrix $\omega$ is nondegenerate by assumption and the system~(\ref{pri_sis}) coincides with the system~(\ref{ptiv_isk_sis}). In this case, we get the change of variables $x=x(a)$. It can be written in two ways:
	\begin{equation}
	\label{mon_zam_x}
	\begin{split}
	x_i&=-\frac{ a_{\sigma^{(i)}}^{(i)} }{ a_{\overline{0}}^{(i)} } \cdot { \left( h^{v^{(1)}} \right) }^{ \frac{ \langle u_1, \sigma^{(i)} \rangle }{p_1} } \cdot \ldots \cdot { \left( h^{v^{(n)}} \right) }^{ \frac{ \langle u_n, \sigma^{(i)} \rangle }{p_n} },\\
	x_i&=\frac{ a_{\sigma^{(i)}}^{(i)} }{ a_{\omega^{(i)}}^{(i)} } \cdot { \left( h^{v^{(1)}} \right) }^{ \frac{ \langle u_1, \sigma^{(i)} - \omega^{(i)} \rangle }{p_1} } \cdot \ldots \cdot { \left( h^{v^{(n)}} \right) }^{ \frac{ \langle u_n, \sigma^{(i)} - \omega^{(i)} \rangle }{p_n} }.
	\end{split}
	\end{equation}
	In formulae (\ref{mon_zam_x}) the vector $h$ has coordinates  $h_i=-\frac{a_{\bar{0}}^{(i)}}{a_{\omega^{(i)}}^{(i)}}$,	vectors $u_1,\ldots, u_n$ are rows of the unimodular matrix~$U$, in turn, vectors $v^{(1)}, \ldots, v^{(n)}$ are columns of the unimodular matrix~$V$ such that $\omega =U S_p V,$ where $S_p= \diag [ p_1, \ldots, p_n]$, $p_j \mid p_{j+1}, \,\, 1\leqslant j \leqslant n-1$.
	
	Remark that $g_i=h_i$ for $i \in J$. Furthermore, if $i\in L$ then $g_i=-\frac{a_{\bar{0}}^{(i)}}{a_{\sigma^{(i)}}^{(i)}}$, and for $i\in T$ we have $g_i=-\frac{a_{\sigma^{(i)}}^{(i)}}{a_{\omega^{(i)}}^{(i)}}$. Getting these ratios from (\ref{mon_zam_x}), we substitute the expressions for $g_i$ into 
	 (\ref{mon_zam})--(\ref{mon_zam_4}). As a result, we get coordinates of the monomial transformation~$r=r(x)$ for the transition from the system~(\ref{pri_sis}) to the system (\ref{ptiv_isk_sis}):
	\begin{equation}
	\label{m_z}
		\begin{aligned}
			&r_{\sigma^{(j)}}^{(j)}=x_j \prod \limits_{l \in L} x_l^{- \varphi_l^{(j)} } \cdot \prod \limits_{t \in T} { \left( - x_t \right) }^{ \varphi_t^{(j)} }, \,\, j \in J,\\
			&r_{\omega^{(j)}}^{(j)}=\prod \limits_{l \in L} x_l^{ - \psi_l^{(j)} } \cdot \prod \limits_{t \in T} {\left(- x_t \right) }^{ \psi_t^{(j)} }, \,\, j \in L,\\
			&r_{\overline{0}}^{(j)}=- \prod \limits_{l \in L} x_l^{ \psi_l^{(j)} } \cdot \prod \limits_{t \in T} {\left(- x_t\right)}^{ - \psi_t^{(j)} }, \,\, j \in T.			
		\end{aligned}
	\end{equation}

	
	According to the polyhomogeneity property~(\ref{polyhomogeneous}), the division of the $j$th coordinate of the solution to the system~(\ref{tri}) on $\lambda_j \ne 0$ is compensated by the multiplication of the coefficient~$a_{\alpha}^{(i)}$ on $\lambda^{\alpha}$. So taking into account  (\ref{sist_yrav}) we obtain the relationship between monomials ${y}^d (x)$ and ${y}^d (r)$ of the following form:
	\begin{equation}
	\label{svaz}
	{y}^d (x)= \prod \limits_{j=1}^{n} \frac{ g_j^{\left<  d, \, \varkappa^{-1}_j \right>} }{ h_j^{ \left< d, \, \omega^{-1}_j \right> } } {y}^d (r),
	\end{equation}
	where $\varkappa_j^{-1}$, $\omega_j^{-1}$ are $j$th rows  of matrices $\varkappa^{-1}$ and $\omega^{-1}$ correspondingly. Using relations (\ref{mon_zam_x}), and the fact that $g_j=h_j$ for $j\in J$, we write (\ref{svaz}) as follows:

\begin{equation}
\label{n_y}
y^d(x)= \prod \limits_{l \in L} x_l^{- \left< d,\, \varkappa^{-1}_l \right> } \prod \limits_{t \in T} {\left(  e^{i \pi} x_t \right) }^{\left< d, \,\varkappa^{-1}_t \right>} y^{d}(r). 
\end{equation}
	
Hence, making the substitution (\ref{m_z}) in the expansion (\ref{kf_ck_2}) and taking into account the relation (\ref{n_y}), we conclude, that the support $S$ of the required Puiseux series consists of points $m(k)=\left( m_1(k), \ldots, m_n(k) \right)$ with coordinates

\begin{equation*}
\begin{aligned}
&m_j(k)=k_j, \,\, j \in J,\\
&m_l(k)= - { \langle \varphi_l^J, k^J \rangle }-{ \langle \psi_l^L, k^L \rangle } + { \langle \psi_l^T, k^T \rangle }- { \langle d, \varkappa^{-1}_l \rangle },\,\, l \in L,\\
&m_t(k)= { \langle \varphi_t^J, k^J \rangle }+{ \langle \psi_t^L, k^L \rangle } - { \langle \psi_t^T, k^T \rangle }+ { \langle d, \varkappa^{-1}_t \rangle }, \,\, t \in T.
\end{aligned}
\end{equation*}
	The coefficient $\tilde{c}_k$ of the Puiseux series is expressed in terms of the coefficient  (\ref{kf_ck_2}) by the following formula
	\begin{equation*}
		\tilde{c}_k= e^{i\pi\sum\limits_{t\in T}\left(k_t+m_t(k)\right)}c_k.
	\end{equation*}
	
	\end{proof}

	As it mentions in Section 2, by the two-sided Abel lemma for hypergeometric series \cite{PSTs05} the cone of the support $S$ of the series defines the logarithmic image $\Log (G)$ of the convergence domain $G$ of the series. It means that the geometry of the domain $G$ is closely related to the structure of the amoeba ${\mathcal A}$ of the discriminant hypersurface $\nabla$ of the system (\ref{ptiv_isk_sis}). The amoeba $\mathcal{A}$ can be obtained from the amoeba $\mathcal{A}^{'}$ of the discriminant set of the system ~(\ref{pri_sis}) via the affine transform associated with the change of variables $r=r(x)$.  Consequently, the recession cone of the set $\Log (G)$ for the Puiseux series of the monomial ${y}^d (x)$ is the image of the negative orthant $- \mathbb{R}^n_{+}$ under the affine transform.
	
	In conclusion, we return to the example from Section 2 to make the following remark. By Theorem \ref{thm_P} we associate the Puiseux series (\ref{raz})  with couples of exponents:
	$$(2,1), (0,0)\in A^{(1)},\,\, (1,2), (0,0)\in A^{(2)},$$ 
	and, accordingly, the series (\ref{raz_2}) with the set
	$$(2,1), (0,0)\in A^{(1)},\,\,  (0,4), (0,0)\in A^{(2)}.$$ 
	
	{\bf Acknowledgments:} The first author was supported by the Foundation for the Advancement of Theoretical Physics and Mathematics "BASIS"(no. $\sharp$ 18-1-7-60-1). 
	The second author was supported by the Foundation for the Advancement of Theoretical Physics and Mathematics "BASIS"(no. $\sharp$ 18-1-7-60-2). The third author was supported by the grant of the Ministry of Education and Science of the Russian Federation (no. 1.2604.2017/PCh). 
%


	\addcontentsline{toc}{section}{References}
	\begin{center}
	\renewcommand{\refname}{\centering References}
	 
	\end{center}
\bigskip

\noindent {\bf Authors' addresses}: 

\noindent Irina Antipova, Siberian Federal University, 79 Svobodny pr., 660041 Krasnoyarsk, Russia, iantipova@sfu-kras.ru

\noindent Ekaterina Kleshkova, ekleshkova@gmail.com

\noindent Vladimir Kulikov, Siberian Federal University, 79 Svobodny pr., 660041 Krasnoyarsk, Russia, v.r.kulikov@mail.ru 	
\end{document}